\definecolor{shadecolor}{rgb}{0.8,0.8,0.8}
\newtheorem{theorem}{Theorem}[section]
\newtheorem{proposition}[theorem]{Proposition}
\newtheorem{corollary}[theorem]{Corollary}
\newcommand{\specexercise}[1]{}
\newenvironment{proof}{{\flushleft \emph{Proof}:}}{\hfill\ding{110}}
\newcommand{\Vol}{\text{Vol}}
\newcommand{\R}{\mathbb{R}}
\renewcommand{\det}{\operatorname{det}}
\newcommand{\W}{\Omega}
\newcommand{\weakly}[1]{\stackrel{#1}{\rightharpoonup}}
\newcommand{\brk}[1]{\left(#1\right)}          
\newcommand{\BRK}[1]{\left\{#1\right\}}        
\numberwithin{equation}{section}
\begin{document}

\title{A simple example of the weak discontinuity of $f\mapsto \int \det \nabla f$}
\author{Cy Maor\footnote{Einstein Institute of Mathematics, The Hebrew University, Jerusalem, Israel.}}
\date{}
\maketitle

\vspace{-0.5cm}
\begin{abstract}
Verifying lower-semicontinuity of integral functionals in the weak topology of Sobolev spaces is a central theme in the calculus of variations.
For integral functionals with $p$-growth, quasiconvexity is a necessary condition for weak lower-semicontinuity in $W^{1,p}$, but is only sufficient if some additional conditions are met.
The standard functional showing the necessity of additional conditions is $f\mapsto \int_\W \det \nabla f$, which fails to be weakly lower-semicontinuous.
However, the common examples showing this failure are non-injective and have a lot of shear.
The aim of this short note is to point out that a known sequence of conformal diffeomorphisms of the $d$-dimensional unit ball that converges weakly to a constant in $W^{1,d}$, exemplifies the weak discontinuity of this functional even when restricting a space to functions which are ``as nice as possible''.
\end{abstract}

\section{Introduction}
Let $\W\subset \R^d$ be a Lipschitz domain, and let $p\in(1,\infty)$.
Let $W:\R^{d\times D} \to \R$ be a continuous function satisfying $p$-growth, i.e.,
\[
|W(A)| \le \alpha(1+|A|^p)
\]
for some $\alpha\ge 0$, and consider the functional
\[
I:W^{1,p}(\W;\R^D) \to \R,\qquad I(f) = \int_\W W(\nabla f(x))\,dx.
\]
One of the fundamental questions in the calculus of variations is whether such functionals are lower-semicontinuous with respect the weak topology of $W^{1,p}$.
The fundamental result is that for weak lower-semicontinuity, $W$ must be \emph{quasiconvex}, i.e., ``convex over gradients'',
\[
W(A) \le \int_{(0,1)^d} W(A + \nabla \phi(x))\,dx , \qquad \phi\in W^{1,\infty}_0((0,1)^d;\R^D).
\]
Moreover, quasiconvexity is sufficient given some additional information is given, e.g.,
\begin{enumerate}[(1)]
\item Restricting $W$: Assuming that $\lim_{|A|\to \infty} \frac{W^-(A)}{1+|A|^p} = 0$, where $W^- = \max\{-W,0\}$.
\item Restricting the space of functions: Considering only functions satisfying given Dirichlet boundary data.
\end{enumerate}
See \cite[Definition 3.10]{BK17} for more details, also \cite[Theorem 8.4]{Dac08} and \cite{KK08}.

The standard functional for showing that some form of additional information is needed, is the $d$-growth functional 
\[
I_{\det} : W^{1,d}(\W;\R^d)\to \R, \qquad I_{\det}(f) = \int_\W \det \nabla f(x)\,dx,
\]
which is neither lower-, nor upper-semicontinuous in the weak $W^{1,d}$-topology, despite the function $A\mapsto \det A$ being quasiconvex (and actually quasiaffine).

The classical example for this failure of weak semicontinuity, which is attributed to Tartar, is given in \cite[Example 7.3]{BM84} (also in \cite[Example 8.6]{Dac08}, \cite[Example 3.6]{BK17}):
\[
f_n:(0,a)^2 \to \R^2, \qquad f_n(x,y) := \frac{1}{\sqrt{n}} (1-y)^n(\sin nx,\cos nx),
\]
for $a\in (0,1)$.
A simple calculation shows that $f_n\to 0$ uniformly and weakly in $W^{1,2}$, however,
\[
\lim_{n\to\infty} \int_{(0,a)^2} \det \nabla f_n = - \frac{a}{2} < 0.
\]
Note that the functions $f_n$ are highly oscillatory and non-injective, have a lot of shear (i.e., they change angles radically), and their images converge to zero (since $f_n\to 0$ uniformly).
Another example appears in \cite[Example 3.6]{BK17}; this one has a constant image, but again, it is non-injective (the support of the sequence vanishes asymptotically).

These examples raise the question whether by restricting the space of functions to embeddings, one can recover weak semicontinuity: 
\begin{quote}
Does $\int_\W \det \nabla f_n(x)\,dx \to 0$ for a sequence of \emph{embeddings} $f_n:\W\to \R^d$ which converges weakly in $W^{1,d}$ to zero?
\end{quote}
In this short note we show that the answer to this question is negative; the example is given by very simple conformal mappings of the unit ball, whose images are a fixed ball, yet $f_n\weakly{} 0$. 
This example already appeared in the other contexts, see e.g., \cite[Example~2.1]{IO11} (also \cite[Remark~2.3]{IOPR21}).
The failure of continuity is due to concentration of $\|\nabla f_n\|_{L^d}$ near the boundary. This is a general phenomenon, as discussed in \cite[Section 3]{BK17}.

This example should also be compared with \cite{Mul90}, which shows that if $f_n \weakly{} 0$ are embeddings (or more generally, maps whose Jacobians have a sign) then $\det \nabla f_n \weakly{} 0$ \emph{locally} in $L^1$ (this result is part of the large literature on convergence of $\det \nabla f_n$ in various topologies; see, e.g., \cite{FLM05} and the references therein). 
The conformal example shows that a (global) weak $L^1$ convergence can fail for functions ``as nice as possible".
\section{The example}
\begin{proposition}[Counterexample for weak lower-semicontinuity]
The M\"obius transformations 
\begin{equation}
\label{eq:example}
f_n:B_1(0)\to \R^d, \qquad f_n(x) = b_n +\frac{2n+1}{n^2} \frac{x-a_n}{|x-a_n|^2},
\end{equation}
where
\[
a_n = (1+1/n,0,\ldots,0), \qquad b_n = (1/n,0,\ldots,0),
\]
are diffeomorphism of the unit ball $B_1(0)$ into a fixed unit ball, that satisfy
\[
f_n \weakly{} 0\,\,\, \text{in} \,\, W^{1,d}(B_1(0),\R^d),\quad \text{and} \quad \int_{B_1(0)} \det \nabla f_n = -\Vol(B_1(0)).
\]
\end{proposition}

This sequence of diffeomorphisms thus exemplifies the failure of $I_{\det}$ to be weakly lower-semicontinuous in $W^{1,d}$.
By translating $f_n$ by $(1,0,\ldots,0)$, one can obtain a sequence of diffeomorphism of $B_1(0)$ into itself, that converge to a point.
Note that it is a sequence of orientation-reversing maps (i.e., maps of negative Jacobians); this is essential, since restricting to maps of non-negative Jacobians, $I_{\det}$ is weakly lower-semicontinuous, as for such maps
\[
I_{\det}(f) = \int_\W \max\{\det \nabla f(x) , 0\}\,dx, 
\]
and the function $W(A) = \max\{\det A,0\}$ is a quasiconvex function satisfying condition (1) above.

\begin{proof}
$f_n$ are M\"obius transformations, and hence map injectively spheres and hyperplanes into spheres an hyperplanes.
Specifically, since $a_n$ is in the direction of $e_1=(1,0,\ldots,0)$ they map the unit sphere into a sphere for which $f(\pm e_1)$ are antipodal points.
Since 
\[
f_n(1,0,\ldots,0) = \brk{-2,0,\ldots,0}, \quad f_n(-1,0,\ldots,0) =0,
\]
the image of $B_1(0)$ is the ball $B_1(-e_1)$.
Since $f_n$ is an inversion of a sphere, it has a negative Jacobian, and thus, since $f_n$ is injective, we obtain that $\int_{B_1(0)} \det \nabla f_n =- \Vol(f_n(B_1(0))) = - \Vol(B_1(0))$.

Since $f_n$ are M\"obius transformations with negative Jacobians, they are anticonformal maps, hence $\nabla f_n^T\nabla f_n = |\det \nabla f_n|^{2/d} I$, and so $\|\nabla f_n\|_{L^d(B_1(0);\R^{d\times d})}$ does not depend on $n$. 
Since $f_n(B_1(0))=B_1(-e_1)$, it follows that $\| f_n\|_{W^{1,d}(B_1(0);\R^d)}$ is uniformly bounded.
For any $x\in B_1(0)$ we have that $|x-a_n|\nrightarrow 0$, hence $f_n(x) \to 0$ pointwise.
Thus $f_n \weakly{} 0$ in $W^{1,d}(B_1(0);\R^d)$.
\end{proof}

\vspace{0.4cm}
We conclude with another simple and nice corollary that follows from this example:
\begin{corollary}
For a given $c>0$, let
\[
\mathcal{A}_c = \BRK{f\in W^{1,d}(B_1(0);\R^d) : \int_{B_1(0)} \det \nabla f = c}, \quad \mathcal{B}_c = \BRK{f\in W^{1,d}(B_1(0);\R^d) : \Vol(f(B_1(0)) = c} .
\]
Then
\begin{equation}
\label{eq:inf_energy}
\inf_{\mathcal{A}_c} \|f\|_{W^{1,d}}^d = \inf_{\mathcal{B}_c} \|f\|_{W^{1,d}}^d = d^{d/2} c,
\end{equation}
and the infimum is not attained.
\end{corollary}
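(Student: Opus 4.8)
The plan is to sandwich both infima between $d^{d/2}c$ from below --- via a sharp pointwise inequality --- and $d^{d/2}c$ from above --- via the conformal example of Section~2, suitably rescaled --- and then to read off non-attainment directly from the equality case. The starting point is the elementary inequality
\[
\det A \;\le\; |\det A| \;\le\; d^{-d/2}\,|A|^d \qquad \text{for every } A\in\R^{d\times d},
\]
where $|A|=(\tr A^TA)^{1/2}$ is the Frobenius norm: if $\sigma_1,\dots,\sigma_d\ge 0$ are the singular values of $A$, then $|\det A|=\prod_i\sigma_i$, and applying first AM--GM and then the quadratic-mean/arithmetic-mean inequality gives $(\prod_i\sigma_i)^{1/d}\le \tfrac1d\sum_i\sigma_i\le (\tfrac1d\sum_i\sigma_i^2)^{1/2}=d^{-1/2}|A|$, with equality precisely when $A=0$ or $A=\sigma Q$ with $\sigma>0$ and $Q\in\SO{d}$. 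Integrating over $B(0,1)$, any $f\in W^{1,d}(B(0,1);\R^d)$ satisfies $\int_{B(0,1)}\det\nabla f\le d^{-d/2}\|\nabla f\|_{L^d}^d\le d^{-d/2}\|f\|_{W^{1,d}}^d$, so $\inf_{\mathcal{A}_c}\|f\|_{W^{1,d}}^d\ge d^{d/2}c$; combining the same pointwise bound with the area formula in the form $\Vol(f(B(0,1)))\le\int_{B(0,1)}|\det\nabla f|$ gives likewise $\inf_{\mathcal{B}_c}\|f\|_{W^{1,d}}^d\ge d^{d/2}c$.

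For the matching upper bound I would take the maps $f_n$ of \eqref{eq:example}, composed with a fixed reflection if necessary so that they are orientation preserving; they remain conformal embeddings, so at each point $\nabla f_n$ is a positive multiple of a rotation and the pointwise inequality above becomes an equality, $\det\nabla f_n=d^{-d/2}|\nabla f_n|^d$. Hence $\|\nabla f_n\|_{L^d}^d=d^{d/2}\int_{B(0,1)}\det\nabla f_n=d^{d/2}\Vol(f_n(B(0,1)))\to d^{d/2}\Vol(B(0,1))$. Choosing $\mu_n>0$ with $\mu_n^d\,\Vol(f_n(B(0,1)))=c$ (possible for large $n$, with $\mu_n$ bounded) and setting $\tilde f_n:=\mu_n f_n$, we get a conformal embedding with $\int_{B(0,1)}\det\nabla\tilde f_n=\Vol(\tilde f_n(B(0,1)))=c$, so $\tilde f_n\in\mathcal{A}_c\cap\mathcal{B}_c$, and $\|\nabla\tilde f_n\|_{L^d}^d=d^{d/2}c$ exactly. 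Finally the elementary estimate $\|f_n\|_{L^d}^d=\tfrac{2^d}{n^d}\int_{B(0,1)}|x-a_n|^{-d}\,dx=O(n^{-d}\log n)$ (integrate in polar coordinates about $a_n$, using $|x-a_n|\ge 1/n$ on $B(0,1)$) shows $\|\tilde f_n\|_{L^d}\to 0$, whence $\|\tilde f_n\|_{W^{1,d}}^d=\|\tilde f_n\|_{L^d}^d+d^{d/2}c\to d^{d/2}c$. Together with the lower bounds this yields \eqref{eq:inf_energy}.

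Non-attainment is then immediate: if $\|f\|_{W^{1,d}}^d=d^{d/2}c$ for some $f$ in $\mathcal{A}_c$ or in $\mathcal{B}_c$, every inequality in the lower-bound chain must be an equality, so in particular $\|f\|_{L^d}=0$, forcing $f=0$ and $\nabla f=0$ a.e.; but then $\int_{B(0,1)}\det\nabla f=0$ (resp.\ $\Vol(f(B(0,1)))=0$), contradicting $c>0$.

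The one place where I expect genuine care to be needed is the area-formula step for $\mathcal{B}_c$: the inequality $\Vol(f(B(0,1)))\le\int|\det\nabla f|$ can fail for $W^{1,d}$ maps violating Lusin's condition (N), which is a real issue at this borderline exponent. One should therefore either build condition (N) into the definition of $\mathcal{B}_c$ --- for instance by working with continuous representatives, or with homeomorphisms --- or invoke the sharp form of the area inequality available in the literature for $W^{1,d}$. Everything else in the argument is either linear algebra or a direct computation with the explicit maps $f_n$.
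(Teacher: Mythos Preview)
Your argument is correct and follows essentially the same route as the paper's proof: the lower bound via the AM--GM inequality on singular values, the upper bound via the rescaled conformal maps $f_n$, and non-attainment from $\|f\|_{L^d}=0$. Your additional care about orientation (composing with a reflection) and about the area-formula step for $\mathcal{B}_c$ at the critical exponent are valid refinements that the paper glosses over.
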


\begin{proof}
Recall the for a matrix $A\in \R^{d\times d}$ with singular values $\sigma_1,\ldots,\sigma_d$, we have that $|A|^2 = \sum_1^d \sigma_i^2$ and $|\det A| = \Pi_1^d \sigma_i$. 
Thus, by the arithmetic-geometric mean inequality, $|A|^d \ge d^{d/2} |\det A|$, with equality holding if and only if all the singular values are the same, i.e., if $A$ is conformal or anticonformal.
Applying this to a map $f\in W^{1,d}(B_1(0);\R^d)$, we obtain
\begin{equation}
\label{eq:inequalities}
\|f\|_{W^{1,d}}^d \ge  \int_{B_1(0)} |\nabla f(x)|^d \, dx \ge  d^{d/2}\int_{B_1(0)} |\det \nabla f (x)| \,dx,
\end{equation}
where equality in the second inequality holds if and only if $f$ is conformal or anticonformal.
The righthand side is larger or equal than both $\int_{B_1(0)} \det \nabla f$ and $\Vol(f(B_1(0))$, and therefore 
\[
\inf_{\mathcal{B}_c} \|f\|_{W^{1,d}}^d \ge d^{d/2} c,\quad  \inf_{\mathcal{A}_c} \|f\|_{W^{1,d}}^d \ge d^{d/2} c.
\]
Let $g:\R^d\to \R^d$ be the reflection of the $e_1$ axis, and consider the functions $g\circ f_n$, where $f_n$ are as in \eqref{eq:example} (the composition with $g$ is just to obtain positive Jacobians).
These are conformal maps, hence $|\nabla (g\circ f_n)|^d = d^{d/2} \det (\nabla (g\circ f_n))$, mapping $B_1(0)$ into $B_1(e_1)$, and converging weakly to zero in $W^{1,d}$.
Thus, by letting $c' = (c/\Vol (B_1(0)))^{1/d}$ and defining $\tilde{f}_n := c' g\circ f_n$ we obtain that $\tilde{f}_n  \in\mathcal{A}_c\cap \mathcal{B}_c$, and
\[
\lim_{n\to \infty} \|\nabla \tilde{f}_n\|_{W^{1,d}}^d = d^{d/2}c,
\]
since $\tilde{f}_n$ are conformal and $\tilde{f}_n\to 0$ strongly in $L^d$. 
This proves \eqref{eq:inf_energy}.
If the infimum is attained by some $f$ then, from \eqref{eq:inequalities},
\[
\|f\|_{W^{1,d}}^d =  \int_{B_1(0)} |\nabla f(x)|^d \, dx,
\]
which implies $f=0$, in contradiction.
\end{proof}

{\footnotesize
\bibliographystyle{amsalpha}
\bibliography{/Users/cy/MyDrive/MyBib.bib}			
}

\end{document}